\newtheorem{theorem}{Theorem}[section]
\newtheorem{proposition}[theorem]{Proposition}
\newtheorem{lemma}[theorem]{Lemma}
\newtheorem{definition}[theorem]{Definition}
\makeatletter \@addtoreset{equation}{section} \makeatother
\newcommand{\pp}{\partial_+}
\newcommand{\pn}{\partial_-}
\def\tilde{\widetilde}
\newcommand{\ve}{\varepsilon}
\newcommand{\beq}{\begin{equation}}
\newcommand{\eeq}{\end{equation}}
\def\TS{\textstyle}
\def\com#1{\quad{\textrm{#1}}\quad}
\def\eq#1{(\ref{#1})}
\def\nn{\nonumber}
\def\dbyd#1{\TS{\frac{\partial}{\partial#1}}}
\begin{document}

\title[Optimal density lower bound on nonisentropic gas dynamics]{Optimal density lower bound on nonisentropic gas dynamics}

\author{Geng Chen}
\address{Department of Mathematics, University of Kansas, Lawrence, KS 66045, U.S.A. ({\tt  gengchen@ku.edu}).}

\date{\today}

\begin{abstract}
{\small
In this paper, we prove a time dependent lower bound on density in the optimal order $O(1/(1+t))$ for the general smooth nonisentropic flow of compressible Euler equations.
}

\end{abstract}
\maketitle

2010\textit{\ Mathematical Subject Classification:} 76N15, 35L65, 35L67.

\textit{Key Words:} Gas dynamics,
compressible Euler equations, conservation laws.

\section{Introduction}
The compressible Euler equations, which is widely used to describe the inviscid compressible fluid such as gas dynamics, in Lagrangian coordinates in one space dimension, satisfy
\begin{align}
\tau_t-u_x&=0\,,\label{lagrangian1}\\
u_t+p_x&=0\,,\label{lagrangian2}\\
\textstyle\big(\frac{1}{2}u^2+e\big)_t+(u\,p)_x&=0\,, \label{lagrangian3}
\end{align}
where $\rho$ is the density, $\tau=\rho^{-1}$ is the specific volume,
$p$ is the pressure, $u$ is the velocity, $e$ is the  specific  internal energy, $t\in\mathbb{R}^+$ is the time
and $x\in\mathbb{R}$ is the spatial coordinate.  
The system is closed by the
Second Law of Thermodynamics:
\beq
  T\,dS = de + p\,d\tau,
\label{2TD}
\eeq
where $S$ is the entropy and $T$ the temperature.
In this paper, we consider the polytropic ideal gas, in which
\[
  p\,\tau=R\,T   \com{and}  e=c_v\,T=\frac{p\tau}{\gamma-1}
\]
with ideal gas constant $R>0$, and specific heat constant $c_v>0$,  which  implies 
\beq
   p=K\,e^{\frac{S}{c_v}}\,\tau^{-\gamma} \com{with adiabatic gas constant} \gamma>1\,,
\label{introduction 3}
\eeq
where 
$K$ is a positive constant, c.f. \cite{courant} or \cite{smoller}.  The classical solutions for compressible Euler equations in Lagrangian and Eulerian coordinates are equivalent \cite{Dafermos2010}. 

For any $C^1$ solution,
it follows that (\ref{lagrangian3}) is equivalent to the conservation of entropy \cite{smoller}:
\beq
   S_t=0\,,
\label{s con}
\eeq
hence 
\[S(x,t)\equiv S(x,0)=: S(x).\]
If the entropy is constant, the flow is isentropic, then
(\ref{lagrangian1}) and (\ref{lagrangian2}) become a closed system,
known as the $p$-system.


In order to establish a large data global existence theory for BV solutions
of compressible Euler equations, which is a major open problem in the field of
hyperbolic conservation laws, we need a sharp estimate on the possible time decay on density. In fact, a solution loses its strict hyperbolicity as density approaches zero when $t\rightarrow \infty$. And one cannot expect to find a constant positive lower bound on $\rho(x,t)$ in general even when the initial density has one, see examples in \cite{courant, jenssen} which will be introduced later. Also see \cites{ls} for the difficulty in proving the global existence of BV solution when solution is close to the vacuum.


The study on the lower bound of density for classical solutions can be traced back to Riemann's pioneer paper \cites{Riemann} in 1860, in which he considered a special isentropic wave interaction between two strong rarefaction waves. Using Riemann's construction, a Lipschitz continuous example including an interaction of two centered rarefaction waves for isentropic Euler equations
was provided in Section 82 in \cites{courant}, in which the function $\min_{x\in\mathbb{R}}\rho(x,t)$ was proved to decay to zero in an order of $O(1+t)^{-1}$ as $t\rightarrow\infty$, while the initial density is uniformly away from zero,
when the adiabatic constant $\gamma=\frac{2\mathcal{N}+1}{2\mathcal{N}-1}$ with any positive integer $\mathcal{N}$. This result was extended to any $\gamma>1$ in \cite{jenssen}. Nonisentropic shock-free examples with density approaching zero in infinite time can be found in \cite{G6}.

Then one natural goal is to show that for any classical solution $\min_{x\in\mathbb{R}}\rho(x,t)$ has a lower bound in the optimal order of $O(1+t)^{-1}$, under the assumption that initial density is uniformly positive.
For isentropic rarefactive solutions, for any $\gamma>1$,  Long Wei Lin first proved that the density has a lower bound in the order of  $O(1+t)^{-1}$ in \cite{lin2} by introducing a polygonal scheme. Next, for general classical isentropic and nonisentropic solutions (possibly including compression), in  \cite{CPZ}, the author, Pan and Zhu found a lower bound on density in the order of  $O(1+t)^{-4/(3-\gamma)}$ when $1<\gamma<3$. 

For the general isentropic classical solution with any $\gamma>1$, the time-dependent lower bound on density in the optimal order $O(1+t)^{-1}$ was found by the author in \cite{G9}. The key idea is to find an invariant domain on some gradient variables measuring backward and forward rarefactions. For any nonisentropic classical solution, the lower bound on density in an almost optimal order $O(1+t)^{-1-\delta}$ for any $0<\delta\ll 1$ was also proved in \cite{G9}.

In this paper, we will prove that, for any classical nonisentropic solution, the density indeed has a lower bound in the order of $O(1+t)^{-1}$. 
\begin{theorem}\label{thm1}
Consider a $C^1$ solution $\big(u(x,t), \tau(x,t), S(x)\big)$ 
of the initial value problem for the compressible Euler equations \eqref{lagrangian1}-\eqref{lagrangian2}, \eqref{s con} and \eqref{introduction 3},
in the region $(x,t)\in \mathbb R\times[0,T)$, with initial data $\big(u(x,0), \tau(x,0)>0, S(x)\big)\in C^1\cap L^\infty(\mathbb R)$. Here, $T$ can be any finite positive constant or infinity. Furthermore, assume that the initial data
$\rho(x,0)=1/\tau(x,0)$, $S'(x)$, $\tilde\alpha(x,0)$ and $\tilde\beta(x,0)$ are all uniformly bounded for any $x\in \mathbb{R}$, where $\tilde\alpha$ and $\tilde\beta$ are defined later in \eqref{al_def0}-\eqref{be_def0}. The total variation of $S(x)$ is finite. Then there exists a constant $C_1$ only depending on the $C^1\cap L^\infty$ norm of initial data
$(u,\tau,\rho,S)(x,0)$, but independent on $T$, such that,
\beq\label{max}\min_{x\in\mathbb{R}}{\rho(x,t)}>\frac{C_1}{1+t}\,.
\eeq
And  $\tilde\alpha(x,t)$ and $\tilde\beta(x,t)$ are both bounded above by constants independent on $T$ when  $(x,t)\in \mathbb R\times[0,T)$.
\end{theorem}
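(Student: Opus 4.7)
The overall plan is to reduce the density lower bound to uniform upper bounds on the forward and backward gradient variables $\tilde\alpha(x,t)$ and $\tilde\beta(x,t)$, following the strategy the author introduced in \cite{G9} for the isentropic case. Since \eqref{lagrangian1} gives $\tau_t = u_x$ in Lagrangian coordinates, and $u_x$ decomposes linearly (with coefficients depending on $\tau$ and $S$) into $\tilde\alpha$ and $\tilde\beta$, a pointwise upper bound of the form $\tilde\alpha,\tilde\beta \le C/(1+t)$ along each particle path integrates to $\tau(x,t)\le C(1+t)$, which is the claimed $\rho \ge C_1/(1+t)$.

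The first step is to derive the evolution equations for $\tilde\alpha$ along the forward characteristic $dx/dt = c$ and for $\tilde\beta$ along the backward characteristic $dx/dt = -c$, where $c$ is the Lagrangian sound speed. For the isentropic $p$-system these take the Riccati form $\pp\tilde\alpha = -k_0(\tau)\,\tilde\alpha^2$, which yields $\tilde\alpha(t) \le 1/(1/\tilde\alpha(0)+k_0 t)$ and hence the optimal $O(1/(1+t))$ decay directly. In the nonisentropic setting one picks up an additional inhomogeneous term driven by $S'(x)$ (with $S$ constant along particle paths by \eqref{s con}), and controlling this term is the entire obstacle to improving over the $O(1/(1+t)^{1+\delta})$ bound in \cite{G9}.

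Next, I would seek a modified gradient variable of the form $\tilde\alpha + \Phi(\tau,S)$, and symmetrically $\tilde\beta + \Psi(\tau,S)$, whose characteristic derivative either cancels or reformats the $S'(x)$ source into a form absorbable via the finite total variation of $S$. The goal is an invariant region $\{\tilde\alpha+\Phi\le M,\ \tilde\beta+\Psi\le M\}$ with $M$ depending only on the $C^1\cap L^\infty$ norm of the initial data and on $TV(S)$. The decisive point is that when a forward characteristic sweeps across the $x$-axis, the cumulative contribution of $|S'(x)|\,dx$ along that curve is bounded by $TV(S)$ \emph{uniformly} in time rather than by a time-integrated quantity that could grow. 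This replaces the $t^\delta$ loss in \cite{G9} and restores the sharp order. Once the modified variables are bounded above, the Riccati mechanism delivers the $O(1/(1+t))$ decay for $\tilde\alpha$ and $\tilde\beta$ themselves, and the density bound follows.

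The main obstacle I anticipate is that the characteristic ODE for $\tilde\alpha$ is coupled to the opposite family $\tilde\beta$ through $\tau$-dependent coefficients and through the $S'(x)$ source: cross-terms generated at this coupling can violate any naive candidate invariant region. Closing the region therefore requires a simultaneous bootstrap on $\tilde\alpha+\Phi$ and $\tilde\beta+\Psi$, in which at each time one uses the current upper envelope together with the $TV(S)$ bound to estimate the source contribution accumulated since $t=0$, and then feeds this back into the Riccati-plus-source inequality to close the invariant region on the next time slice. Carrying out this bootstrap carefully along \emph{all} characteristics, uniformly in $T$, and choosing $\Phi,\Psi$ so that no genuinely $t$-growing term survives, is where the main technical effort lies.
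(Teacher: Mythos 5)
Your general direction is right---reduce to uniform upper bounds on $\tilde\alpha$ and $\tilde\beta$, and obtain those by shifting the gradient variables so that the shifted quantities stay in an invariant region---but there are two genuine gaps.

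First, the reduction as stated is miscalibrated. You claim that $\tilde\alpha,\tilde\beta\le C/(1+t)$ integrates to $\tau\le C(1+t)$; in fact $\tau_t=u_x=\tfrac12(\tilde\alpha+\tilde\beta)$, so $\tilde\alpha+\tilde\beta\le C/(1+t)$ integrates only to $\tau\le\tau_0+C\ln(1+t)$. The bound the theorem needs, $\tau\le C(1+t)$, requires only the \emph{constant} upper bound $\tilde\alpha,\tilde\beta\le C$; no decay of the gradient variables is needed, and the paper explicitly observes that $\max_x\{\tilde\alpha,\tilde\beta\}$ may actually grow. The correct and much simpler reduction is: a constant upper bound on $\tilde\alpha,\tilde\beta$ gives $\tau_t\le C$, hence $\tau(x,t)\le\tau(x,0)+Ct$, hence $\rho\ge C_1/(1+t)$. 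Your last paragraph, invoking a ``Riccati mechanism'' that ``delivers $O(1/(1+t))$ decay'' for $\tilde\alpha,\tilde\beta$ once the modified variables are bounded, is therefore chasing something both unnecessary and unavailable.

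Second, and more substantively, you have not identified the mechanism that actually closes the invariant region. You propose a general shift $\Phi(\tau,S)$ whose role is to convert the $S'(x)$ source into something absorbable against $TV(S)$, via a bootstrap. That is not how the argument works. The paper's shift is the specific and simple $\alpha=\tilde\alpha+\lambda\eta$, $\beta=\tilde\beta+\lambda\eta$ with $\lambda$ a fixed \emph{constant} chosen large relative to $M_D:=\|m_x\|_\infty$ and $M_\eta:=\|\eta\|_\infty$ (recall $m=e^{S/(2c_v)}$ so $m_x$ plays the role of $S'$). After rewriting the Riccati equation \eqref{frem1} for $\partial_+\alpha$ in terms of $\alpha,\beta$, the $m_x$-source terms are dominated \emph{pointwise} by the extra negative terms produced by the shift, provided $\lambda\ge\tfrac4\gamma M_D$ and $M$ is comparably large; what remains is $\partial_+\alpha\le K_1 M(M-\alpha)$ on any characteristic segment where $M/2\le\alpha\le M$ and $\beta\le M$, which forbids $\alpha$ from reaching $M$ in finite time (and symmetrically for $\beta$). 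No integration of $|S'(x)|\,dx$ along characteristics enters the Riccati analysis at all. The total variation of $S$ is used only indirectly, through the earlier result (Proposition~\ref{Thm_upper}, from \cite{G8}) that $\eta$ and $|u|$ admit $T$-independent upper bounds---and that $\eta$-bound is exactly what makes the constant shift $\lambda\eta$ itself bounded, so that bounding $\alpha,\beta$ bounds $\tilde\alpha,\tilde\beta$. Without pinning down this pointwise-dominance structure and the specific choice of shift, the bootstrap you outline has no way to close: the ``cross-terms generated at the coupling'' you correctly worry about are precisely what the shift is engineered to kill, and a TV budget does not control them.
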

This lower bound is in the optimal order by examples in \cite{courant,jenssen} mentioned earlier. The lower bound on density achieved in this paper can give us more precise estimate on the life span of classical solution than before and more importantly can motivate us in searching the lower bound of density for BV solutions including shock waves, which is a major obstacle in establishing large BV existence theory for Euler equations. The proof of Theorem \ref{thm1} mainly relies on the observation of a new invariant domain on some gradient variables, given in Theorem \ref{thm2} which itself is a very interesting result.

This paper is divided into 4 sections. In Section 2, we compare the old idea in \cite{G9} and the new idea in this paper. In Section 3, we introduce some basic setup and existing results. In Section 4, we prove the main theorem for the full Euler equations. 

\section{Idea of the proof for Theorem \ref{thm1}}
Now we briefly introduce the old ideas in \cite{G9} and then the new idea in this paper.

First, we recall two gradient variables used in earlier papers \cite{G3, G9,G6}\footnote{Note in \cite{G3, G9,G6}, variables $\alpha$ and $\beta$ mean  $\tilde\alpha$ and $\tilde\beta$ in this paper, respectively. In this paper, we reserve $\alpha$ and $\beta$ for other use.}
\beq \left.\begin{array}{l}
\tilde\alpha=u_x+m\eta_x+\frac{\gamma-1}{\gamma}m_x
\eta,\end{array}\right.\label{al_def0}\eeq \beq
\left.\begin{array}{l}\tilde\beta=u_x-m\eta_x-
\frac{\gamma-1}{\gamma}m_x \eta,\end{array}\right.\label{be_def0} \eeq
where two new variables $m$ and $\eta$ take the role of $S$ and $\tau$, respectively,
\beq\label{etam}
   m=e^{\frac{S}{2c_v}},\quad
   \eta  = \TS\frac{2\sqrt{K\gamma}}{\gamma-1}\,
\tau^{-\frac{\gamma-1}{2}}\,.
\eeq
The positive constants $K$ and $c_v$ are used in \eqref{introduction 3}. Using $\tilde\alpha$ and $\tilde\beta$, one can define the rarefaction and compression ($R/C$) characters in the local sense for non-isentropic solutions, under the following definition.
\begin{definition}
\label{def2}
\cite{G3,G6}
The local {$R/C$} character for a classical ($C^1$) solution of \eqref{lagrangian1}-\eqref{lagrangian3} is
\[\begin{array}{lllll} 
	\text{Forward}& $R$ &\text{iff} \quad\tilde\alpha>0 ,\\ 
	\text{Forward}& $C$ & \text{iff} \quad \tilde\alpha<0,\\
	\text{Backward}& $R$ & \text{iff} \quad \tilde\beta>0,\\ 
	\text{Backward}& $C$ &\text{iff}\quad \tilde\beta<0.
\end{array}\]
\end{definition}

To achieve a time dependent lower bound on density in the optimal order $O(1+t)^{-1}$,  the key step is to find a constant uniform upper bound on $\tilde\alpha(x,t)$ and $\tilde\beta(x,t)$ for any $(x,t)$ in the domain we consider, or in another word, to bound the maximum rarefaction (expansion) in both forward and backward directions.  Actually, if we can find a constant global upper bound on $\tilde\alpha(x,t)$ and $\tilde\beta(x,t)$, then by the conservation of mass (\ref{lagrangian1}),
we can easily prove that
\beq\label{main0}
\tau_t=u_x=\frac{1}{2}(\tilde\alpha+\tilde\beta)<\hbox{Constant},
\eeq
which directly gives the $O(1+t)^{-1}$ lower bound on density, together with the constant positive lower bound on the initial density. The relation \eqref{main0} was first noticed and used by \cite{G9}.

For the isentropic solutions, in \cite{G3}, we found that $\max_{x\in\mathbb R}\{\tilde\alpha(x,t),\tilde\beta(x,t)\}< N$ is invariant on $t$ when $N\geq0$, by studying the Riccati system given by Lax in \cite{lax2} that $\tilde\alpha$ and $\tilde\beta$ satisfy. Physically, this can be explained as that the maximum expansion will not increase with respect to time, although the backward expansion $\max_{x\in\mathbb R}\tilde\beta(x,t)$ or forward expansion $\max_{x\in\mathbb R}\tilde\alpha(x,t)$ might increase. Using this result, one can easily find constant upper bounds on $\tilde\alpha$ and $\tilde\beta$, then derive an $O(1+t)^{-1}$ lower bound on density by the argument in the previous paragraph. 

However, it is very hard to prove a similar decay for non-isentropic solutions. Some numerical evidences showed the possible increase of $\max_{x\in\mathbb R}\{\tilde\alpha(x,t),\tilde\beta(x,t)\}$ on $t$. Alternately, in \cite{G9} we proved that if
\beq\label{al-be-0}
\max_{x\in\mathbb R}\Big\{\rho^\ve\tilde\alpha(x,0),\rho^\ve\tilde\beta(x,0)\Big\}<N
\eeq
then 
\beq\label{al-be-t}
\max_{x\in\mathbb R}\Big\{\rho^\ve\tilde\alpha(x,t),\rho^\ve\tilde\beta(x,t)\Big\}<N
\eeq
for sufficiently small positive $\ve$ and sufficiently large $N$ depending on $\ve$ and the initial data, when $t>0$.
Although the use of $\rho^\ve$ helped us obtain some decay, this result could only provide an $O(1+t)^{-1-\delta}$ lower bound on density with $\delta=\frac{\ve}{1-\ve}>0$, i.e. an almost optimal order estimate.

In this paper, we will prove a lower bound on density in the optimal order $O(1+t)^{-1}$ for the non-isentropic solutions.
The key new idea is to consider the following gradient variables, transformed from $\tilde\alpha$ and $\tilde\beta$ in \eqref{al_def0}-\eqref{be_def0}:
\beq \left.\begin{array}{l}
\alpha:=\tilde{\alpha}+\lambda\, \eta\end{array}\right.\label{al2}\eeq \beq
\left.\begin{array}{l}\beta:=\tilde{\beta}+\lambda\, \eta,\end{array}\right.\label{be2} \eeq
for some $\lambda$ given later in \eqref{la_Def} with $\eta$ defined in \eqref{etam}, then show the following decay on $\max_{x\in\mathbb R}\{\alpha(x,t),\beta(x,t)\}$. 
\begin{theorem}\label{thm2}
For any smooth solution considered in Theorem \ref{thm1} satisfying the same initial assumptions, if 
\beq\label{ab0}
\max_{x\in\mathbb R}\{\alpha(x,0),\beta(x,0)\}<M\,,
\eeq
then 
\beq\label{abt}
\max_{x\in\mathbb R}\{\alpha(x,t),\beta(x,t)\}<M\,,
\eeq
for any $t\in \mathbb{R}^+$, where $M$ is a constant satisfying
\beq\label{M_def}
\textstyle M\geq M^*:=4 M_\eta \,M_D\cdot\max\left\{\frac{2(\gamma-1)}{\gamma},\frac{(3\gamma-1)}{\gamma+1}\frac{4}{\gamma}\right\}
\eeq
and
\beq\label{la_Def}
\textstyle \lambda:=\frac{1}{4M_\eta}\frac{\gamma+1}{3\gamma-1} M^*=\frac{\gamma+1}{3\gamma-1}M_D\cdot\max\left\{\frac{2(\gamma-1)}{\gamma},\frac{(3\gamma-1)}{\gamma+1}\frac{4}{\gamma}\right\}.
 \eeq
Here $M_D$ and $M_\eta$ are constant global upper bounds on $|m_x|$ and $\eta$ in \eqref{u_rho_bounds}
and \eqref{m_bounds}, respectively.
\end{theorem}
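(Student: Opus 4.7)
The plan is a contradiction argument in the spirit of \cite{G3,G9}. Suppose \eqref{abt} fails. Since \eqref{ab0} is strict, continuity provides a first time $t^*>0$ at which $\max_{x\in\mathbb{R}}\{\alpha(x,t^*),\beta(x,t^*)\}=M$, attained at some $x^*$. Without loss of generality assume $\alpha(x^*,t^*)=M$; the case $\beta(x^*,t^*)=M$ is handled symmetrically at the end. For every $t<t^*$ one has $\alpha(\cdot,t)<M$, so along the forward characteristic through $(x^*,t^*)$ the function $\alpha$ climbs to $M$ from strictly smaller values, forcing
\[
\pp\alpha(x^*,t^*)\ \geq\ 0.
\]
The entire goal is to contradict this by establishing the strict reverse inequality.

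First I would write out, or invoke from \cite{G3,G9}, the Lax-type Riccati ODE satisfied by $\tilde\alpha$ along the forward characteristic, which has the schematic form
\[
\pp\tilde\alpha\ =\ -k(\eta)\,\tilde\alpha^{2}\ +\ (\text{lower-order terms in } \tilde\alpha,\tilde\beta,m,m_x,\eta),
\]
with $k(\eta)>0$. In the isentropic case all source terms vanish and one obtains directly the invariance of $\{\tilde\alpha<N\}$; in the nonisentropic case there is a ``bad'' source linear in $m_x\eta$, possibly also multiplied by $\tilde\alpha$, that obstructs this invariance. Differentiating \eqref{etam} and using $\tau_t=u_x=\tfrac12(\tilde\alpha+\tilde\beta)$ then produces $\pp\eta$ as an affine combination of $\tilde\alpha$ and $\tilde\beta$. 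Adding $\lambda$ times this expression to the Riccati equation for $\tilde\alpha$ yields the governing equation for $\alpha=\tilde\alpha+\lambda\eta$.

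Next I would evaluate this governing equation at the first-touch point $(x^*,t^*)$. Substituting $\tilde\alpha=M-\lambda\eta$ and using the upper bound $\tilde\beta\leq M-\lambda\eta$ (which holds because $\beta(x^*,t^*)\leq M$), together with the a priori bounds $0<\eta\leq M_\eta$ and $|m_x|\leq M_D$, each source contribution becomes bounded by an explicit expression affine in $\tilde\alpha$ with coefficients involving only $M_D$, $M_\eta$ and $\gamma$. The choice of $\lambda$ in \eqref{la_Def} is made precisely so that the $\lambda\pp\eta$ contribution absorbs, with the right sign, the worst $m_x\eta\cdot\tilde\alpha$ source inside $\pp\tilde\alpha$. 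The surviving terms are then dominated by the Riccati sink $-k(\eta)\tilde\alpha^{2}$ once $\tilde\alpha\geq M-\lambda M_\eta$ is sufficiently large; the threshold $M\geq M^*$ in \eqref{M_def}, with the maximum of $\tfrac{2(\gamma-1)}{\gamma}$ and $\tfrac{(3\gamma-1)}{\gamma+1}\tfrac{4}{\gamma}$, is exactly the size making this domination strict. This delivers $\pp\alpha(x^*,t^*)<0$, the desired contradiction.

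The main obstacle is the algebraic bookkeeping needed to identify which terms in the nonisentropic Lax system remain genuinely ``bad'' after the substitution $\alpha=\tilde\alpha+\lambda\eta$, and to verify that the two critical ratios in \eqref{M_def} arise naturally from the coefficient balance (one controlling cancellation of the $m_x\eta$ source by $\lambda\pp\eta$, the other controlling the residual linear-in-$\tilde\alpha$ terms against the quadratic sink). The $\beta$-case is completely symmetric: one works along the backward characteristic with $\pn$ in place of $\pp$ and with the roles of $\tilde\alpha$ and $\tilde\beta$ interchanged in the Lax system, so the same $\lambda$ and $M$ work by the symmetry of the system under this exchange.
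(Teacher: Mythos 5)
Your high-level strategy (contradiction at a first-touch point, plus a choice of $\lambda$ engineered so that the $\lambda\,\pp\eta$ term cancels the worst $m_x\eta$ source) is in the same spirit as the paper, and the paper does prove exactly the algebraic decomposition you gesture at in \eqref{new1}. However there are two gaps you would have to close.

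First, your first-touch point is defined as the first $t^*$ at which $\max_{x\in\mathbb R}\{\alpha,\beta\}$ equals $M$, attained at a finite $x^*$. On the unbounded line this supremum need not be attained, and nothing in your argument justifies the existence of such an $x^*$. The paper sidesteps this by working inside the (compact) characteristic triangle emanating backward from a hypothetical bad point, finding the first time $t_1$ at which $\alpha$ or $\beta$ reaches $M$ there, and then integrating along a forward characteristic segment that is guaranteed to stay inside that triangle. Some localization of this kind is necessary.

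Second, your explanation of why $\pp\alpha(x^*,t^*)<0$ is not accurate, and this matters precisely in the dangerous corner case. You attribute the sign to the Riccati sink $-k(\eta)\tilde\alpha^2$. But in the Lax system \eqref{frem1} the quadratic piece is $k_1\,\tilde\alpha(\tilde\beta-\tilde\alpha)$, and after passing to $\alpha,\beta$ it becomes a multiple of $(\beta-\alpha)$ (the third bracket in \eqref{new1}). At a first-touch point with $\alpha=\beta=M$ this term is exactly zero, so the ``sink'' gives you nothing there. The paper is explicit about this: $\pp\alpha<0$ is established on the boundary $\alpha=M$ only when $\beta<M$, and the corner $(\alpha,\beta)=(M,M)$ has to be handled differently. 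In fact the strict negativity at the corner, if one wishes to argue that way, must come from the $\lambda\,\pp\eta$ contribution (the first bracket in \eqref{new1}), which is strictly negative as long as $\eta>0$ because $\alpha-\lambda\eta+\tfrac{2(\gamma-1)}{\gamma}\eta m_x\geq M/2$ under \eqref{3.13}--\eqref{3.14}. The paper avoids relying on that by instead proving Lemma \ref{lem1}, a one-sided Gronwall bound $\pp\alpha\leq K_1 M(M-\alpha)$ valid whenever $M/2\leq\alpha\leq M$ and $\beta\leq M$, and then integrating to show $\alpha$ cannot reach $M$ in finite time. That route needs no discussion of signs at the corner at all, and it is the cleaner way to close the argument.

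So: your outline is salvageable, but you must (i) localize to a characteristic triangle to guarantee a genuine first-touch point, and (ii) either carry the $\lambda\,\pp\eta$ term explicitly to get strict negativity at the corner, or, better, replace the pointwise sign argument with the Gronwall comparison the paper uses.
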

Using this result, we can find constant upper bounds on  $\tilde\alpha(x,t)$ and $\tilde\beta(x,t)$
by the constant global upper bound on $\eta$ given in \cite{G8}, although $\max_{x\in\mathbb R}\{\tilde\alpha(x,t),\tilde\beta(x,t)\}$
might not decay with respect to $t$. Then we can prove Theorem \ref{thm1} using \eqref{main0} and the initial constant lower bound on density (uniform upper bound on $\tau(x,0)$).

\section{Some basic setup and existing results for Euler equations}
We first introduce some notations and basic setups for $C^1$ solutions
of full Euler equations \eqref{lagrangian1}$\sim$\eqref{introduction 3}, which were used in \cite{G3}.

Recall we use new variables $m$ and $\eta$ to take the roles of $S$ and $\tau$, respectively:
\beq
   m=e^{\frac{S}{2c_v}}\label{m def}
\eeq
and 
\beq
   \eta  = \TS\frac{2\sqrt{K\gamma}}{\gamma-1}\,
\tau^{-\frac{\gamma-1}{2}}\,.\label{z def}
\eeq
We use $c$ to denote the nonlinear Lagrangian wave speed for full Euler equations, where
\beq
  c=\sqrt{-p_\tau}=
  \sqrt{K\,\gamma}\,{\tau}^{-\frac{\gamma+1}{2}}\,e^{\frac{S}{2c_v}}\,.
\label{c_def2}
\eeq
The forward and backward characteristics are described by
\beq\label{pmc_full}
  \frac{dx}{dt}=c \com{and} \frac{dx}{dt}=-c\,,
\eeq
and we denote the corresponding directional derivatives along these characteristics by
\[
  \pp := \dbyd t+c\,\dbyd x \com{and}
  \pn := \dbyd t-c\,\dbyd x\,,
\]
respectively.

It follows that
\begin{align}
  \tau&=K_{\tau}\,\eta^{-\frac{2}{\gamma-1}}\,,\nn\\
  p&=K_p\, m^2\, \eta^{\frac{2\gamma}{\gamma-1}}\,,\label{tau p c}\\
  c&=c(\eta,m)=K_c\, m\, \eta^{\frac{\gamma+1}{\gamma-1}}\,,\nn
\end{align}
with positive constants
\beq
  K_\tau:=\Big(\frac{2\sqrt{K\gamma}}{\gamma-1}\Big)^\frac{2}{\gamma-1}\,,
\quad
  K_p:=K\,K_\tau^{-\gamma},\com{and}
  K_c:=\TS\sqrt{K\gamma}\,K_\tau^{-\frac{\gamma+1}{2}}\,.
\label{Kdefs}
\eeq

In these coordinates, for $C^1$ solutions, equations
\eq{lagrangian1}--\eqref{introduction 3} are equivalent to
\begin{align}
  \eta_t+\frac{c}{m}\,u_x&=0\,, \label{lagrangian1 zm}\\
  u_t+m\,c\,\eta_x+2\frac{p}{m}\,m_x&=0\,,\label{lagrangian2 zm}\\
  m_t&=0\label{lagrangian3 zm}\,,
\end{align}
where the last equation comes from (\ref{s con}), which is equivalent to
(\ref{lagrangian3}), c.f. \cite{smoller}.  Note that, while the solution remains $C^1$,
$m=m(x)$ is given by the initial data and can be regarded as a
stationary quantity.

We denote the Riemann variables by
\beq
  r:=u-m\,\eta\quad\hbox{and}\quad s:=u+m\,\eta\,.
\label{r_s_def}
\eeq
Different from the isentropic case ($m$ constant), for general non-isentropic flow, $s$ and $r$
vary along characteristics.  

Recall that we denote gradient variables $\tilde\alpha$ and $\tilde\beta$
in \eqref{al_def0}-\eqref{be_def0}. These variables satisfy the following Riccati equations, where the detailed derivation  can be found in  \cite{G3}.
{\begin{proposition}\cite{G3}\label{remark2} The classical solutions
for (\ref{lagrangian1})$\sim$(\ref{lagrangian3}) satisfy \beq
\left.\begin{array}{l}\partial_+\tilde\alpha=k_1\{k_2
(3\tilde\alpha+\tilde\beta)+\tilde\alpha\tilde\beta-\tilde\alpha^2\}, \label{frem1}
\end{array}\right.\eeq
and \beq \left.\begin{array}{l}\partial_-\tilde\beta=k_1\{-k_2
(\tilde\alpha+3\tilde\beta)+\tilde\alpha\tilde\beta-\tilde\beta^2\},\label{frem2}
\end{array}\right.\eeq
where \beq\left.\begin{array}{l}
k_1=\frac{(\gamma+1)K_c}{2(\gamma-1)} \eta^{\frac{2}{\gamma-1}}, \quad
k_2=\frac{\gamma-1}{\gamma(\gamma+1)}\eta\, m_x. \label{k def}\end{array}\right.\eeq
\end{proposition}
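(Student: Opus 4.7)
The plan is to derive the Riccati identities \eqref{frem1}--\eqref{frem2} by direct computation from the reformulated system \eqref{lagrangian1 zm}--\eqref{lagrangian3 zm}, routing through the Riemann variables $r$ and $s$ of \eqref{r_s_def}.

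\textbf{Step 1 (Source laws for $r,s$).} Using $m_t=0$ together with \eqref{lagrangian1 zm} and \eqref{lagrangian2 zm}, expand $\partial_+ s = (u_t+m\eta_t)+c(u_x+m_x\eta+m\eta_x)$. Substituting the evolution equations makes the $u_x$ and $\eta_x$ contributions cancel, leaving $\partial_+ s = m_x(c\eta-2p/m)$. From the explicit forms \eqref{tau p c}, a short check of the constants gives $c\eta-2p/m=c\eta/\gamma$, so
\[
\partial_+ s \;=\; \tfrac{1}{\gamma}\,c\eta\,m_x, \qquad \partial_- r \;=\; \tfrac{1}{\gamma}\,c\eta\,m_x,
\]
the second identity following from the symmetric calculation on $r=u-m\eta$.

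\textbf{Step 2 (Commute with $\partial_x$).} Since $[\partial_+,\partial_x]=-c_x\partial_x$, differentiating Step 1 gives $\partial_+ s_x=\partial_x(c\eta m_x/\gamma)-c_x s_x$. Because $\partial_+ m=cm_x$ one has $\partial_+ m_x=c\,m_{xx}$, and $\partial_+\eta=c(\eta_x-u_x/m)$ by \eqref{lagrangian1 zm}. Writing $\tilde\alpha = s_x-\tfrac{1}{\gamma}\eta m_x$ and forming $\partial_+\tilde\alpha$, the $c\eta\,m_{xx}$ contributions cancel (this is precisely what forces the coefficient $(\gamma-1)/\gamma$ in \eqref{al_def0}), as do the $c\,\eta_x\,m_x$ terms, reducing the expression to
\[
\partial_+\tilde\alpha \;=\; \tfrac{m_x}{\gamma}\Big(c_x\eta + \tfrac{c}{m}u_x\Big) - c_x\,s_x.
\]

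\textbf{Step 3 (Re-express in $\tilde\alpha,\tilde\beta$).} From $c=K_c\,m\,\eta^{(\gamma+1)/(\gamma-1)}$ one has $c_x = (c/m)m_x + (c(\gamma+1)/((\gamma-1)\eta))\eta_x$. Inverting \eqref{al_def0}--\eqref{be_def0} gives $u_x=\tfrac{1}{2}(\tilde\alpha+\tilde\beta)$, $m\eta_x=\tfrac{1}{2}(\tilde\alpha-\tilde\beta)-\tfrac{\gamma-1}{\gamma}m_x\eta$, and $s_x=\tilde\alpha+\tfrac{1}{\gamma}m_x\eta$. Substituting into the Step 2 formula, the $m_x^2\eta$ terms and the mixed $m_x\eta_x$ terms cancel, leaving a quadratic polynomial in $\tilde\alpha,\tilde\beta$ with coefficients depending only on $c,m,\eta,m_x$.

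\textbf{Step 4 (Identify $k_1$ and $k_2$).} Collecting terms, the coefficient of $\tilde\alpha(\tilde\alpha-\tilde\beta)$ is $-c(\gamma+1)/(2(\gamma-1)m\eta)$ and the coefficient of $(3\tilde\alpha+\tilde\beta)$ is $cm_x/(2\gamma m)$. Factoring $k_1=\tfrac{(\gamma+1)K_c}{2(\gamma-1)}\eta^{2/(\gamma-1)}$ out of both reproduces \eqref{frem1} exactly with $k_2=\tfrac{\gamma-1}{\gamma(\gamma+1)}\,\eta m_x$. Equation \eqref{frem2} follows by the $c\mapsto -c,\ \tilde\alpha\leftrightarrow\tilde\beta$ symmetry of the derivation, together with the analogous identity $\partial_- r=\tfrac{1}{\gamma}c\eta m_x$ from Step 1.

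The main obstacle is Step 2: every second-order derivative of $m,u,\eta$ must cancel, and this requirement is what selects the precise coefficient $(\gamma-1)/\gamma$ appearing in \eqref{al_def0}--\eqref{be_def0}; any other choice would leave a residual $m_{xx}$ or $\eta_{xx}$ term and destroy the first-order Riccati structure. Once these cancellations are verified, the remaining work is careful bookkeeping of the powers of $\eta$ and the constants $K_c,K_p$.
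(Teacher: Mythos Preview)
Your derivation is correct: the source laws $\partial_+ s=\partial_- r=\tfrac{1}{\gamma}c\eta m_x$, the commutator step, the cancellation of the $m_{xx}$ and $m_x\eta_x$ terms, and the final identification of $k_1,k_2$ all check out line by line. Note, however, that the paper does not actually prove Proposition~\ref{remark2}; it simply quotes the result from \cite{G3} (``the detailed derivation can be found in \cite{G3}''), so there is no in-paper proof to compare against. Your route through the Riemann invariants $r,s$ is the standard one and is in all likelihood what \cite{G3} does as well. One small quibble: the symmetry you invoke in Step~4 is more accurately the reflection $(u,x)\mapsto(-u,-x)$, which sends $\partial_+\mapsto\partial_-$, $\tilde\alpha\leftrightarrow\tilde\beta$, and $k_2\mapsto -k_2$; phrasing it as ``$c\mapsto -c$'' is a bit loose since $c$ is a derived positive quantity, but the conclusion is right.
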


%

Finally, we review a result on the uniform upper bounds of $|u|$ and $\rho$  established by the author, Young and Zhang in \cite{G8}, for later references.

We always assume that all initial conditions in Theorem \ref{thm1} are satisfied.
By \eq{m def}, there exist constants $M_L$, $M_U$ and $M_D$ such that
\beq
  0 < M_L < m(\cdot) < M_U\,,\quad \hbox{and} \quad |m_x(\cdot)|< M_D\,.
\label{m_bounds}
\eeq
Also there exist positive constants $M_{\bar s}$ and $M_{\bar r}$,
such that, in the initial data,
\beq
  |s(\cdot,0)|<M_{\bar s} \com{and}
  |r(\cdot,0)|<M_{\bar r}\,.
\label{Mrs}
\eeq
 
In the following proposition established in \cite{G8} (Theorem 2.1), $|u|$ and $\eta$ are shown to be uniformly bounded above.

\begin{proposition}{\em \cite{G8}}
\label{Thm_upper}Assume all initial conditions in Theorem \ref{thm1} are satisfied.
And assume system \eqref{lagrangian1}$\sim$\eqref{lagrangian3} with \eqref{introduction 3}  has a  $C^1$ solution when $t\in[0,T)$, then one
has the uniform bounds
\beq\label{u_rho_bounds}
   |u(x,t)|\leq\frac{L_1+L_2}{2}{M_U}^{\frac{1}{2\gamma}}
\com{and}
   \eta(x,t)\leq\frac{L_1+L_2}{2}{M_L}^{\frac{1}{2\gamma}-1}=:M_\eta,
\eeq
where $L_1$ and $L_2$ are positive constants only depending on $\gamma$, $M_{\bar s}$, $M_{\bar r}$, $M_L$, $M_U$ and $V$
with
\beq
  V := \frac{1}{2c_v}\int_{-\infty}^{+\infty}|S'(x)|\;dx
     = \int_{-\infty}^{+\infty}\frac{|m'(x)|}{m(x)}\;dx<\infty\,,
\label{Vdef}
\eeq
Here $T$ can be any positive number or infinity. Two bounds in \eqref{u_rho_bounds} are both independent of $T$.
\end{proposition}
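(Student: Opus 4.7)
The plan is to derive a closed Riccati-type system for the shifted variables $\alpha$ and $\beta$ along the characteristics, then run a barrier argument on the box $\{\alpha\le M\}\cap\{\beta\le M\}$ in which the specific choice of $\lambda$ and $M^*$ is calibrated so that the vector field points inward on the boundary.

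\medskip
\noindent\textbf{Step 1: Evolution equations for $\alpha$ and $\beta$.} Using $\alpha=\tilde\alpha+\lambda\eta$, write $\partial_+\alpha=\partial_+\tilde\alpha+\lambda\,\partial_+\eta$. The Riccati part $\partial_+\tilde\alpha$ is supplied by \eqref{frem1}. For $\partial_+\eta$, equation \eqref{lagrangian1 zm} gives $\eta_t=-(c/m)u_x$, so $\partial_+\eta=(c/m)(m\eta_x-u_x)$, and using \eqref{al_def0}--\eqref{be_def0} together with the identity $c/m=K_c\eta^{(\gamma+1)/(\gamma-1)}=\frac{2(\gamma-1)}{\gamma+1}k_1\eta$ yields
\begin{equation*}
\partial_+\eta=-\frac{2(\gamma-1)}{\gamma+1}\,k_1\,\eta\Big(\tilde\beta+\tfrac{\gamma-1}{\gamma}\,m_x\,\eta\Big).
\end{equation*}
Substituting $\tilde\alpha=\alpha-\lambda\eta$, $\tilde\beta=\beta-\lambda\eta$ into \eqref{frem1} and combining with the above produces an expression for $\partial_+\alpha$ which is a polynomial in $(\alpha,\beta)$ whose coefficients are explicit in $\eta$, $m_x$, $\lambda$, with a decisive new negative linear-in-$\tilde\beta$ damping term $-\frac{2(\gamma-1)}{\gamma+1}\lambda k_1\eta\,\tilde\beta$. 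The analogous equation for $\partial_-\beta$ follows by the symmetry between \eqref{frem1} and \eqref{frem2}.

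\medskip
\noindent\textbf{Step 2: Barrier argument.} Since $\max\{\alpha(\cdot,0),\beta(\cdot,0)\}<M$ and all quantities remain finite on compact time intervals (by the initial assumptions on $\tilde\alpha,\tilde\beta$ and Proposition~3.3), continuity produces a first time $t_0>0$ at which $\sup_x\max\{\alpha,\beta\}=M$ (or else the conclusion \eqref{abt} holds). Choose a point $(x_0,t_0)$ realizing this supremum and, without loss of generality by the $\alpha\leftrightarrow\beta$ symmetry, assume $\alpha(x_0,t_0)=M$. Along the forward characteristic through $(x_0,t_0)$ one must have $\partial_+\alpha(x_0,t_0)\ge 0$; I will derive $\partial_+\alpha(x_0,t_0)<0$, a contradiction. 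At this point, divide by the positive factor $k_1$ and use $\tilde\alpha=M-\lambda\eta$, $\tilde\beta=\beta-\lambda\eta\le M-\lambda\eta$; the resulting expression is affine in $\tilde\beta$.

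\medskip
\noindent\textbf{Step 3: The key sign inequality.} The coefficient of $\tilde\beta$ equals $k_2+(M-\lambda\eta)-\lambda\eta\frac{2(\gamma-1)}{\gamma+1}$; the lower bound \eqref{M_def} on $M$ (combined with $\eta\le M_\eta$ and $|k_2|\le\frac{\gamma-1}{\gamma(\gamma+1)}M_\eta M_D$) forces this coefficient to be positive, so the maximum of $\partial_+\alpha/k_1$ over $\tilde\beta\le M-\lambda\eta$ is attained at the endpoint. After factoring out $\frac{2(\gamma-1)}{\gamma(\gamma+1)}\eta$, this endpoint value collapses to
\begin{equation*}
M(2m_x-\lambda\gamma)+\lambda\eta\bigl[\lambda\gamma-(\gamma+1)m_x\bigr]\le 0,
\end{equation*}
an inequality in the two parameters $m_x\in[-M_D,M_D]$ and $\eta\in[0,M_\eta]$. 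Checking the extreme signs $m_x=\pm M_D$ (which is where the linear-in-$m_x$ expression is maximized, the subcase depending on the sign of $2M-\lambda\eta(\gamma+1)$) and inserting the explicit values $\lambda=\frac{\gamma+1}{3\gamma-1}M_D\max\{\tfrac{2(\gamma-1)}{\gamma},\tfrac{4(3\gamma-1)}{\gamma(\gamma+1)}\}$ and $M\ge M^*=4M_\eta M_D\max\{\cdot,\cdot\}$ from \eqref{M_def}--\eqref{la_Def}, each case gives strict negativity.

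\medskip
\noindent\textbf{Main obstacle.} The entropy-driven term $k_1 k_2(3\tilde\alpha+\tilde\beta)$ in \eqref{frem1} is not sign-definite and has no counterpart in the isentropic case, which is why the naive invariant-domain $\{\tilde\alpha\le N,\tilde\beta\le N\}$ fails. The role of the shift $\lambda\eta$ is to create the new damping term $-\frac{2(\gamma-1)}{\gamma+1}\lambda k_1\eta\,\tilde\beta$, which at the boundary contributes a definite negative quantity of order $\lambda\eta\cdot(M-\lambda\eta)$. Making the two extremal choices of $m_x$ (entropy gradient positive vs.\ negative) both work with the same $\lambda$ and $M$ is the computational heart of the argument and is precisely what is encoded by the $\max$ in the formulas \eqref{M_def} and \eqref{la_Def}; getting the coefficient of $\tilde\beta$ to remain positive (so that the affine maximum is attained at $\tilde\beta=M-\lambda\eta$) simultaneously with the endpoint inequality holding for both signs of $m_x$ is the delicate balancing point.
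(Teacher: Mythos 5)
Your proposal does not address the statement you were asked to prove. Proposition~\ref{Thm_upper} asserts uniform, $T$-independent upper bounds on $|u(x,t)|$ and $\eta(x,t)$ in terms of the initial Riemann-invariant bounds $M_{\bar s}, M_{\bar r}$, the bounds $M_L, M_U$ on $m$, and the total entropy variation $V$. What you have sketched is instead a proof of Theorem~\ref{thm2}: the invariance of the region $\max\{\alpha,\beta\}<M$ under the flow, via a barrier argument on the Riccati system for the shifted variables $\alpha=\tilde\alpha+\lambda\eta$, $\beta=\tilde\beta+\lambda\eta$. These are different statements in kind --- Proposition~\ref{Thm_upper} is a zeroth-order ($L^\infty$) bound on the solution itself and involves no gradient quantities whatsoever, whereas Theorem~\ref{thm2} is a bound on first-order (gradient) quantities. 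In fact, your Step~2 explicitly cites Proposition~\ref{Thm_upper} as an input ("all quantities remain finite\dots by\dots Proposition~3.3"), and your Step~3 uses $\eta\le M_\eta$ --- which is precisely the conclusion of Proposition~\ref{Thm_upper} --- as a known bound. So the proposal is circular if read as a proof of Proposition~\ref{Thm_upper}.

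Moreover, the paper does not prove Proposition~\ref{Thm_upper} at all; it is imported verbatim from reference \cite{G8} (Theorem~2.1 there). The actual argument in \cite{G8} has nothing to do with Riccati equations or invariant regions for $\tilde\alpha,\tilde\beta$. It proceeds by analyzing the characteristic ODEs $\partial_\pm s$ and $\partial_\mp r$ for the Riemann variables $s=u+m\eta$, $r=u-m\eta$ from \eqref{r_s_def}, which in the nonisentropic case are not conserved along characteristics but have right-hand sides controlled by $m'(x)/m(x)$. Integrating along characteristics and using that $\int|m'/m|\,dx=V<\infty$ yields $L^\infty$ bounds on $s$ and $r$ that grow at most by a factor governed by $e^{CV}$, from which the stated bounds on $|u|$ and $\eta$ follow by unwinding \eqref{r_s_def}. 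If you were asked to prove Proposition~\ref{Thm_upper}, that is the line of reasoning you should pursue; the Riccati/barrier analysis belongs to Lemma~\ref{lem1} and Theorem~\ref{thm2}, which sit one derivative higher and logically downstream of Proposition~\ref{Thm_upper}.
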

We remark that there is a typo in Theorem 2.1 in \cite{G8}, where $\rho$ shall be $\eta$.
Here $V<\infty$ since $S(x)$ is $C^1$ and has a finite BV norm.

%
\section{Proof of the main theorem}

Let's first prove the following key lemma. Recall that $\alpha$ and $\beta$ are defined in \eqref{al2} and \eqref{be2}, respectively.

\begin{lemma}\label{lem1}
Under the initial assumptions in Theorem \ref{thm1}, for any smooth solution in $t\in[0,T)$, if 
 $\frac{M}{2}\leq \alpha\leq M$ and $\beta\leq M$ on a piece of forward characteristic $\Gamma(t)$ with $t\in[t_2,t_1]\in[0,T)$, then there exists a positive constant $K_1$ only depending on $M_\eta$ and $\gamma>1$, such that,
\begin{align}
\partial_+\alpha\leq K_1M(M-\alpha),\label{4.10}
\end{align}
on $\Gamma(t)$ with $t\in[t_2,t_1]$. 

\end{lemma}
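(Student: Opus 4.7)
The plan is to differentiate $\alpha=\tilde\alpha+\lambda\eta$ along the forward characteristic and then exploit the specific choice of $\lambda$ in \eqref{la_Def} to close the estimate. Proposition \ref{remark2} supplies $\partial_+\tilde\alpha$; for $\partial_+\eta$ I would use \eqref{lagrangian1 zm}, the inversions $u_x=(\tilde\alpha+\tilde\beta)/2$ and $m\eta_x=(\tilde\alpha-\tilde\beta)/2-\frac{\gamma-1}{\gamma}m_x\eta$, and the identity $c/m=\frac{2(\gamma-1)}{\gamma+1}k_1\eta$, giving $\partial_+\eta=-\frac{2(\gamma-1)}{\gamma+1}k_1\eta\,\tilde\beta-2(\gamma-1)k_1k_2\eta$. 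Substituting $\tilde\alpha=\alpha-\lambda\eta$ and $\tilde\beta=\beta-\lambda\eta$ in $\partial_+\alpha=\partial_+\tilde\alpha+\lambda\partial_+\eta$ and regrouping $k_2$-terms, this yields
\[
\partial_+\alpha = k_1\Bigl[\,k_2\bigl(3\alpha+\beta-2(\gamma+1)\lambda\eta\bigr) + (\alpha-\lambda\eta)(\beta-\alpha) - \tfrac{2(\gamma-1)}{\gamma+1}\lambda\eta(\beta-\lambda\eta)\,\Bigr].
\]

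From \eqref{M_def}--\eqref{la_Def} and $\eta\leq M_\eta$ one has $\lambda\eta\leq \tfrac{\gamma+1}{4(3\gamma-1)}M^*\leq M/4$, so $\alpha-\lambda\eta\geq M/4>0$. Denote the bracket by $G(\alpha,\beta)$. A direct computation gives $\partial_\beta G=k_2+(\alpha-\lambda\eta)-\tfrac{2(\gamma-1)}{\gamma+1}\lambda\eta\geq k_2+\tfrac{(\gamma+1)M}{4(3\gamma-1)}$, which is nonnegative since $M\geq M^*$ and $\tfrac{(\gamma+1)M^*}{4(3\gamma-1)}\geq |k_2|$ (use $|k_2|\leq\tfrac{\gamma-1}{\gamma(\gamma+1)}M_\eta M_D$ together with $M^*\geq\tfrac{8(\gamma-1)}{\gamma}M_\eta M_D$, giving $M^*/|k_2|\geq 8(\gamma+1)\geq 16$). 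Hence $G(\alpha,\cdot)$ is nondecreasing on $\{\beta\leq M\}$ and its maximum is attained at $\beta=M$.

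The main obstacle is to verify that $G(M,M)\leq 0$, i.e.
\[
k_2\bigl(4M-2(\gamma+1)\lambda\eta\bigr)\;\leq\;\tfrac{2(\gamma-1)}{\gamma+1}\lambda\eta\,(M-\lambda\eta).
\]
The two arguments of the $\max$ in \eqref{la_Def} are tuned to close this inequality in the two sign-cases of $m_x$: when $m_x\geq 0$, $k_2\geq 0$ and the second argument (which forces $\lambda\geq 4M_D/\gamma$) dominates the left side by the right via the factor $M-\lambda\eta\geq 3M/4$; when $m_x<0$, the coefficient $4M-2(\gamma+1)\lambda\eta$ can itself change sign for large $\gamma$, and the first argument $\tfrac{2(\gamma-1)}{\gamma}$ supplies the complementary threshold on $\lambda$. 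Carrying out this algebraic sign analysis is the technical heart of the lemma. Once $G(M,M)\leq 0$ is in hand, the exact expansion
\[
G(\alpha,M) = G(M,M) + (M-\alpha)\bigl[(\alpha-\lambda\eta)-3k_2\bigr]
\]
yields $G(\alpha,M)\leq (M-\alpha)\bigl[M+3|k_2|\bigr]\leq 2M(M-\alpha)$ for $\alpha\in[M/2,M]$ (using $|k_2|\leq M/16$). Combined with the monotonicity in $\beta$ and the uniform bound $k_1\leq\tfrac{(\gamma+1)K_c}{2(\gamma-1)}M_\eta^{2/(\gamma-1)}$, this gives $\partial_+\alpha\leq K_1\,M(M-\alpha)$ with $K_1$ depending only on $\gamma$ and $M_\eta$, the $M_D$-dependence having cancelled through the normalization by $M^*$.
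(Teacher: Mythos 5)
Your argument is structurally sound but takes a genuinely different route from the paper, and its technical heart is only sketched rather than verified. The paper's own proof regroups $\partial_+\alpha$ into the three pieces displayed in \eqref{new1}, and then observes — using \eqref{3.12}--\eqref{3.14} together with the hypotheses $M/2\leq\alpha\leq M$, $\beta\leq M$ — that the first two pieces are nonpositive and that the coefficient multiplying $(\beta-\alpha)$ in the third lies in $(0,\tfrac{5}{4}M]$; the bound then falls out immediately, with no case analysis on the sign of $m_x$. You instead factor $\partial_+\alpha=k_1\,G(\alpha,\beta)$ and reduce to $\partial_\beta G\geq 0$ plus the corner inequality $G(M,M)\leq 0$ followed by a first-order expansion of $G(\cdot,M)$ about $\alpha=M$. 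All the pieces you actually compute are correct: the identity for $\partial_+\alpha$, the formula $\partial_\beta G=k_2+(\alpha-\lambda\eta)-\tfrac{2(\gamma-1)}{\gamma+1}\lambda\eta$ and the fact that it is nonnegative, the exact expansion $G(\alpha,M)=G(M,M)+(M-\alpha)\bigl[(\alpha-\lambda\eta)-3k_2\bigr]$, the bound $|k_2|\leq M/16$, and the observation that the resulting $K_1$ is free of $M_D$.

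The gap is that you assert, but do not prove, $G(M,M)\leq 0$, explicitly deferring it as ``the technical heart of the lemma.'' This step does in fact close with the constants chosen in \eqref{M_def}--\eqref{la_Def}: dividing $G(M,M)$ by $\tfrac{\gamma-1}{\gamma+1}\eta>0$ reduces it to
\[
\frac{m_x}{\gamma}\bigl(4M-2(\gamma+1)\lambda\eta\bigr)\;\leq\;2\lambda\,(M-\lambda\eta),
\]
and using $\lambda\eta\leq M/4$ one has $M-\lambda\eta\geq 3M/4$ while $\bigl|4M-2(\gamma+1)\lambda\eta\bigr|\leq\max\{4M,\tfrac{(\gamma+1)M}{2}\}$; together with $|m_x|\leq M_D$ and the two lower bounds on $\lambda$ encoded in the max of \eqref{la_Def}, the inequality follows by elementary estimation in each sign case. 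Until that verification is written out, however, the proposal is incomplete. It is also worth noting that the paper's grouping is more economical precisely because it makes the requisite sign conditions manifest term by term via \eqref{3.12}--\eqref{3.14}, avoiding your corner analysis altogether; your reformulation is correct but trades a direct term-by-term check for a harder global inequality at $(\alpha,\beta)=(M,M)$.
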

\begin{proof}
First by \eqref{lagrangian1 zm} and \eqref{al_def0}-\eqref{be_def0}, we have
\[
 \partial_+ \eta=\eta_t+c\eta_x=-\frac{c}{m}u_x+c\eta_x=
 -K_c\eta^{\frac{\gamma+1}{\gamma-1}}(\tilde\beta
+\textstyle\frac{\gamma-1}{\gamma}\eta m_x) \,,
\]
and
\[
 \partial_- \eta=\eta_t-c\eta_x=-\frac{c}{m}u_x-c\eta_x=-K_c\eta^{\frac{\gamma+1}{\gamma-1}}(\tilde\alpha
 -\textstyle\frac{\gamma-1}{\gamma}m_x )\,,
\]
then using (\ref{frem1}), we have
\begin{align}
&\partial_+\alpha\nn\\
=&\partial_+\tilde\alpha+\lambda\partial_+\eta \nn\\
=&\textstyle\frac{1}{2\gamma}K_c\eta^{\frac{\gamma+1}{\gamma-1}}m_x(3\tilde\alpha+\tilde\beta)
+\frac{\gamma+1}{2(\gamma-1)}K_c \eta^{\frac{2}{\gamma-1}}\tilde\alpha(\tilde\beta-\tilde\alpha)
-\lambda K_c \eta^{\frac{\gamma+1}{\gamma-1}} (\tilde\beta+\frac{\gamma-1}{\gamma}\eta m_x)\nn\\
=&\textstyle\frac{1}{2\gamma}K_c\eta^{\frac{\gamma+1}{\gamma-1}}m_x(4\alpha-4\lambda\eta +\beta-\alpha)
-\lambda K_c \eta^{\frac{\gamma+1}{\gamma-1}} (\beta-\alpha+\frac{\gamma-1}{\gamma}\eta m_x-\lambda \eta+\alpha)\nn\\
&\textstyle+\frac{\gamma+1}{2(\gamma-1)}K_c \eta^{\frac{2}{\gamma-1}}(\alpha-\lambda \eta)(\beta-\alpha)\nn\\
=&\textstyle\frac{2}{\gamma}K_c\eta^{\frac{\gamma+1}{\gamma-1}}m_x(\alpha-\lambda\eta)
-\lambda K_c\eta^{\frac{\gamma+1}{\gamma-1}}(\alpha-\lambda\eta+\frac{\gamma-1}{\gamma}\eta m_x)\nn\\
&\textstyle
+\frac{\gamma+1}{2(\gamma-1)}K_c\eta^{\frac{2}{\gamma-1}}(\alpha-\frac{3\gamma-1}{\gamma+1}\lambda\eta+\frac{\gamma-1}{\gamma(\gamma_+1)}\eta m_x)(\beta-\alpha)\nn\\
=&\textstyle\frac{2}{\gamma}K_c\eta^{\frac{\gamma+1}{\gamma-1}}m_x(\alpha-\lambda\eta)
-\frac{1}{2}\lambda K_c\eta^{\frac{\gamma+1}{\gamma-1}}(\alpha-\lambda\eta)-
\frac{1}{2}\lambda K_c\eta^{\frac{\gamma+1}{\gamma-1}}(\alpha-\lambda\eta+\frac{2(\gamma-1)}{\gamma}\eta m_x)\nn\\
&\textstyle
+\frac{\gamma+1}{2(\gamma-1)}K_c\eta^{\frac{2}{\gamma-1}}(\alpha-\frac{3\gamma-1}{\gamma+1}\lambda\eta+\frac{\gamma-1}{\gamma(\gamma_+1)}\eta m_x)(\beta-\alpha)\nn\\
=&\textstyle-\frac{1}{2}\lambda K_c\eta^{\frac{\gamma+1}{\gamma-1}}(\alpha-\lambda\eta+\frac{2(\gamma-1)}{\gamma}\eta m_x)-
\frac{1}{2}K_c\eta^{\frac{\gamma+1}{\gamma-1}}(\lambda-\frac{4}{\gamma }m_x)(\alpha-\lambda\eta)\nn\\
&\textstyle
+\frac{\gamma+1}{2(\gamma-1)}K_c\eta^{\frac{2}{\gamma-1}}(\alpha-\frac{3\gamma-1}{\gamma+1}\lambda\eta+\frac{\gamma-1}{\gamma(\gamma_+1)}\eta m_x)(\beta-\alpha).
\label{new1}
\end{align}

By \eqref{M_def} and \eqref{la_Def}, it is easy to see that, for any $\gamma>1$,
\beq\label{3.12}
\textstyle \lambda\geq 
 \frac{4}{\gamma}M_D,
 \eeq
\beq\label{3.13}
\textstyle\frac{1}{4}M\geq \frac{2(\gamma-1)}{\gamma}M_\eta \,M_D=
\max\left\{\frac{2(\gamma-1)}{\gamma},\frac{\gamma-1}{\gamma(\gamma+1)}\right\}M_\eta \,M_D,
\eeq
and
\beq\label{3.14}
\textstyle\frac{1}{4}M\geq\frac{3\gamma-1}{\gamma+1} M_\eta\lambda=\max\left\{\lambda, \frac{3\gamma-1}{\gamma+1}\lambda \right\}M_\eta,
\eeq
since $\textstyle\frac{2(\gamma-1)}{\gamma}>\frac{\gamma-1}{\gamma(\gamma+1)}$ and
$\textstyle \frac{3\gamma-1}{\gamma+1}>1$ when $\gamma>1$. Recall that $M_D$ and $M_\eta$ are upper bounds on $|m_x|$
and $\eta$, respectively.

Recall that on the forward characteristic piece $\Gamma(t)$ with $t\in[t_2,t_1]$, 
\beq\label{4.4}\frac{M}{2}\leq \alpha\leq M,\qquad \beta\leq M. \eeq
So by \eqref{3.12}-\eqref{3.14}, we know that
 \beq\textstyle\alpha-\lambda\eta+\frac{2(\gamma-1)}{\gamma}\eta m_x>0,\quad 
(\lambda-\frac{4}{\gamma }m_x)(\alpha-\lambda\eta)>0\eeq and
\beq\label{4.6}\textstyle 0<\alpha-\frac{3\gamma-1}{\gamma+1}\lambda\eta+\frac{\gamma-1}{\gamma(\gamma_+1)}\eta m_x
\leq \frac{5}{4}M.\eeq
Then using  inequalities in \eqref{4.4}-\eqref{4.6} on \eqref{new1}, we have
\begin{align}
\partial_+\alpha\nn
\leq& \textstyle\frac{\gamma+1}{2(\gamma-1)}K_c\eta^{\frac{2}{\gamma-1}}(\alpha-\frac{3\gamma-1}{\gamma+1}\lambda\eta+\frac{\gamma-1}{\gamma(\gamma_+1)}\eta m_x)(M-\alpha)\nn\\
\leq&\textstyle\frac{\gamma+1}{2(\gamma-1)}K_c\eta^{\frac{2}{\gamma-1}} \frac{5}{4}M(M-\alpha)\nn\\
\leq& K_1M(M-\alpha)\label{4.11},
\end{align}
for some constant $K_1>0$ only depending on $M_\eta$ and $\gamma>1$.
\end{proof}

Similar as \eqref{new1}, $\beta$ satisfies
\begin{align}
\partial_-\beta\nn=&\textstyle-\frac{1}{2}\lambda K_c\eta^{\frac{\gamma+1}{\gamma-1}}(\beta-\lambda\eta-\frac{2(\gamma-1)}{\gamma}\eta m_x)-
\frac{1}{2}K_c\eta^{\frac{\gamma+1}{\gamma-1}}(\lambda+\frac{4}{\gamma }m_x)(\beta-\lambda\eta)\nn\\
&\textstyle
+\frac{\gamma+1}{2(\gamma-1)}K_c\eta^{\frac{2}{\gamma-1}}(\beta-\frac{3\gamma-1}{\gamma+1}\lambda\eta-\frac{\gamma-1}{\gamma(\gamma_+1)}\eta m_x)(\alpha-\beta).
\label{new2}
\end{align}
So by \eqref{3.12}-\eqref{3.14}, a symmetric result as in Lemma \ref{lem1} holds on the backward characteristic direction. Here we omit the detail.
\smallskip

Next we will prove Theorem \ref{thm2}. Let's first explain the key idea.
We observe that the domain 
$\max_{x\in\mathbb R}\{\alpha(x,t),\beta(x,t)\}<M$ is invariant on $t$. In fact, as shown in Figure \ref{pic}, it is easy to check that $\partial_+\alpha<0$ on the right boundary by \eqref{4.11} (strict inequality when $\beta<M$), and similarly $\partial_-\beta<0$ on the upper boundary, of the region $\max\{\alpha,\beta\}<M$, except at the point $(\alpha,\beta)=(M,M)$. So the solution will not flow out of this domain if it is initially in it. 

	\begin{figure}[htp] \centering
		\includegraphics[width=.3\textwidth]{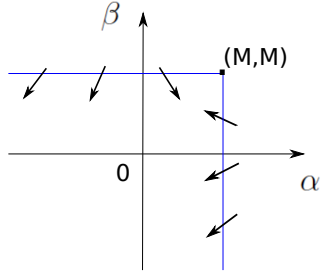}
		\caption{Classical solutions will not flow out of the domain $\max\{\alpha,\beta\}<M$.
		\label{pic}}
	\end{figure} 
\bigskip

Now we give the proof of Theorem \ref{thm2}.
\begin{proof}
We first prove \eqref{abt} by contradiction. Without loss of generality, assume that 
$\max\{\alpha,\beta\}(x_0,t_0)=M$ at some point $(x_0,t_0)$. See Figure \ref{fig0}.

	\begin{figure}[htp] \centering
		\includegraphics[width=.4\textwidth]{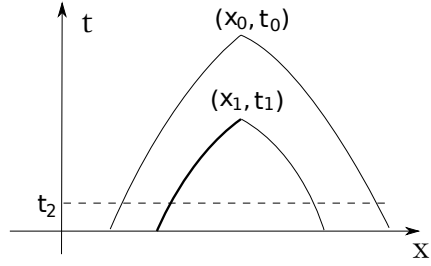}
		\caption{Proof of Theorem \ref{thm2}.\label{fig0}}
	\end{figure}

Because wave speed $c$ is bounded above, we can find the characteristic triangle with vertex $(x_0,t_0)$ and lower boundary
on the initial line $t=0$, denoted by $\Omega$, which is the outer characteristic triangle in Figure \ref{fig0}. 

In the closed region $\Omega$, we can find the first time $t_1$ such that $\alpha=M$ or $\beta=M$ in $\Omega$. More precisely, There exists a point $(x_1,t_1)\in\Omega$ such that, $\alpha(x_1,t_1)=M$ or/and $\beta(x_1,t_1)=M$, while
\[
\max_{(x,t)\in \Omega,\, t<t_1}\Big\{\alpha(x,t),\beta(x,t)\Big\}<M.\]

Without loss of generality, assume 
$\alpha(x_1,t_1)=M$ and $\beta(x_1,t_1)\leq M$. The proof for another case when $\beta(x_1,t_1)=M$ is entirely same.
Let's denote the characteristic triangle with vertex $(x_1,t_1)$ as  $\Omega_1\in\Omega$, where $\Omega_1$ is the inner characteristic triangle in Figure \ref{fig0}. Then
\beq\label{p_main_1}
\max_{(x,t)\in \Omega_1}\Big\{\alpha(x,t),\beta(x,t)\Big\}<M,\eeq
and $\alpha(x_1,t_1)=M$.
By the continuity of $\alpha$,  we could find a time $t_2\in[0,t_1)$ such that,
\beq\label{p_main_2}
\alpha(x,t)\geq M/2\,,\quad \hbox{for any}\quad (x,t)\in \Omega_1\quad \hbox{and}\quad t\geq t_2\,.
\eeq

Next we derive a contradiction. By Lemma \ref{lem1} and 
\eqref{p_main_1}$\sim$\eqref{p_main_2}, along the forward characteristic segment  through $(x_1,t_1)$ when $t_2\leq t<t_1$,
\[
\partial_+\alpha\leq K_1 M(M-\alpha).
\]
which gives, through the integration along the forward characteristic,
\[
\ln \frac{1}{M-\alpha(t)}\leq\textstyle K_1 M(t-t_2)\,.
\]
As $t\rightarrow t_1-$, the left hand side approaches infinity 
while the right hand side approaches a finite number, which gives a contradiction. Hence we prove \eqref{abt} then the Theorem \ref{thm2}.
\end{proof}

Finally, we prove Theorem \ref{thm1}.

First, using the initial bounds on $\alpha(x,0)$ and $\beta(x,0)$ from the bounds on  $\tilde\alpha(x,0)$ and $\tilde\beta(x,0)$ and $\eta(x,0)$, we can find a constant $M$ for \eqref{ab0}. Then using Theorem \ref{thm2}, we know that
$\alpha(x,t)$ and $\beta(x,t)$ are also uniformly bounded by $M$. Then,
because of the constant uniform upper bound on $\lambda\, \eta$ by Proposition \ref{Thm_upper}, we can directly prove that $\tilde\alpha(x,t)$ and $\tilde\beta(x,t)$ are also uniformly bounded above by a constant in the domain $(x,t)\in \mathbb R\times[0,T)$, using definitions \eqref{al2}-\eqref{be2}. Then by  \eqref{main0} and the initial constant lower bound on  $\rho(x,0)$ (uniform upper bound on $\tau(x,0)$)  for any $x\in \mathbb{R}$, we can prove \eqref{max}. So Theorem \ref{thm1} is proved.

\section*{Acknowledgement}
The author's research  was partially supported by NSF, with grant  DMS-1715012.}


\begin{bibdiv}
\begin{biblist}



\bib{G3}{article}{
   author={Chen, Geng},
   title={Formation of singularity and smooth wave propagation for the
   non-isentropic compressible Euler equations},
   journal={J. Hyperbolic Differ. Equ.},
   volume={8},
   date={2011},
   number={4},
   pages={671--690},
}
\bib{G9}{article}{
   author={Chen, Geng},
   title={Optimal time-dependent lower bound on density for classical
   solutions of 1-D compressible Euler equations},
   journal={Indiana Univ. Math. J.},
   volume={66},
   date={2017},
   number={3},
   pages={725--740},
}

\bib{CPZ}{article}{
   author={Chen, Geng},
   author={Pan, Ronghua},
   author={Zhu, Shengguo},
   title={Singularity formation for the compressible Euler equations},
   journal={SIAM J. Math. Anal.},
   volume={49},
   date={2017},
   number={4},
   pages={2591--2614},
}



\bib{G6}{article}{
   author={Chen, Geng},
   author={Young, Robin},
   title={Shock-free solutions of the compressible Euler equations},
   journal={Arch. Ration. Mech. Anal.},
   volume={217},
   date={2015},
   number={3},
   pages={1265--1293},
}

\bib{G8}{article}{
   author={Chen, Geng},
   author={Young, Robin},
   author={Zhang, Qingtian},
   title={Shock formation in the compressible Euler equations and related
   systems},
   journal={J. Hyperbolic Differ. Equ.},
   volume={10},
   date={2013},
   number={1},
   pages={149--172},
}

\bib{courant}{book}{
   author={Courant, Richard},
   author={Friedrichs, Kurt O.},
   title={Supersonic Flow and Shock Waves},
   publisher={Interscience Publishers, Inc., New York, N. Y.},
   date={1948},
   pages={xvi+464},
}
\bib{Dafermos2010}{book}{
   author={Dafermos, Constantine M.},
   title={Hyperbolic conservation laws in continuum physics},
   series={Grundlehren der Mathematischen Wissenschaften [Fundamental
   Principles of Mathematical Sciences]},
   volume={325},
   publisher={Springer-Verlag, Berlin},
   date={2000},
   pages={xvi+443},
}

\bib{jenssen}{article}{
   author={Jenssen, Helge Kristian},
   title={On exact solutions of rarefaction-rarefaction interactions in
   compressible isentropic flow},
   journal={J. Math. Fluid Mech.},
   volume={19},
   date={2017},
   number={4},
   pages={685--708},
   issn={1422-6928},
}
\bib{lax2}{article}{
   author={Lax, Peter D.},
   title={Development of singularities of solutions of nonlinear hyperbolic
   partial differential equations},
   journal={J. Mathematical Phys.},
   volume={5},
   date={1964},
   pages={611--613},
}
\bib{lin2}{article}{
   author={Lin, Long Wei},
   title={On the vacuum state for the equations of isentropic gas dynamics},
   journal={J. Math. Anal. Appl.},
   volume={121},
   date={1987},
   number={2},
   pages={406--425},
}

\bib{ls}{article}{
   author={Liu, Tai-Ping},
   author={Smoller, Joel A.},
   title={On the vacuum state for the isentropic gas dynamics equations},
   journal={Adv. in Appl. Math.},
   volume={1},
   date={1980},
   number={4},
   pages={345--359},
}
\bib{Riemann}{article}{
   author={Riemann, Bernhard},
   title={Ueber die Fortpflanzung ebener Luftwellen von endlicher Schwingungsweite},
   journal={Abhandlungen der Kniglichen Gesellschaft der Wissenschaften zu Gottingen},
   volume={8},
   date={1860},
   pages={43},
}
\bib{smoller}{book}{
   author={Smoller, Joel},
   title={Shock waves and reaction-diffusion equations},
   series={Grundlehren der Mathematischen Wissenschaften [Fundamental
   Principles of Mathematical Science]},
   volume={258},
   publisher={Springer-Verlag, New York-Berlin},
   date={1983},
   pages={xxi+581},
}
\end{biblist}
\end{bibdiv}

\end{document}